\documentclass[12pt,reqno,oneside]{amsart}
      
\usepackage{amsmath,amsthm,amsfonts,amssymb}
\usepackage{indentfirst}
\usepackage{url}
\usepackage{graphicx}
\usepackage[usenames]{color}

\theoremstyle{plain}
\newtheorem{theorem}{Theorem}[section]

\newtheorem{prop}[theorem]{Proposition}

\theoremstyle{definition}

\newtheorem{obs}[theorem]{Remark}

\numberwithin{equation}{section}

\begin{document}

\baselineskip=18pt

\title[Growth Models Under Uniform Catastrophes]{Growth Models Under Uniform Catastrophes}
\author[Joan  Amaya]{Joan Amaya}
\address[Joan J. Amaya]{Statistics Department, Institute of Mathematics and Statistics, University of S\~ao Paulo, CEP 05508-090, S\~ao Paulo, SP, Brazil.}
\email{jjamayat@ime.usp.br}
\author[Valdivino V. Junior]{Valdivino V. Junior}
\address[Valdivino V. Junior]{Institute of Mathematics and Statistics, Federal University of Goias, Campus Samambaia, 
CEP 74001-970, Goi\^ania, GO, Brazil}
\email{vvjunior@ufg.br}
\author[F\'abio P. Machado]{F\'abio P. Machado}
\address[F\'abio P. Machado]{Statistics Department, Institute of Mathematics and Statistics, University of S\~ao Paulo, CEP 05508-090, S\~ao Paulo, SP, Brazil.}
\email{fmachado@ime.usp.br}

\author[Alejandro Rold\'an-Correa]{Alejandro~Rold\'an-Correa}
\address[Alejandro Rold\'an]{Instituto de Matem\'aticas, Universidad de Antioquia, Calle 67, no 53-108, Medellin, Colombia}
\email{alejandro.roldan@udea.edu.co}

\thanks{F\'abio Machado and Valdivino Junior were partially supported by Fapesp (2023/13073-8) and CNPq (408590/2024-6). Alejandro Roldan by Universidad de Antioquia (Project No. 2025-80410).}
%corrigido em 27/nov - Fabio

%%%%

\keywords{Branching processes, catastrophes, population dynamics}
\subjclass[2010]{60J80, 60J85, 92D25}
\date{\today}

\begin{abstract}
We consider stochastic growth models for populations organized in colonies and subject to uniform catastrophes. To assess population viability, we analyze scenarios in which individuals adopt dispersion strategies after catastrophic events. For these models, we derive explicit expressions for the survival probability and the mean time to extinction, both with and without spatial constraints.
In addition, we complement this analysis by comparing uniform catastrophes with binomial and geometric catastrophes in models with dispersion and no spatial restrictions. Here, the terms uniform, binomial and geometric refer to the probability distributions governing the number of individuals that survive immediately after a catastrophe. This comparison allows us to quantify the impact of different types of catastrophic events on population persistence.
\end{abstract}

\maketitle

\section{Introduction}
\label{S: Introduction}

Stochastic population models are essential tools in mathematical biology and ecology for understanding how populations evolve over time under the influence of random events. Among the most significant biological and environmental factors affecting population dynamics are catastrophes and spatial constraints, which can drastically reduce population size or even drive populations to extinction. When catastrophes occur, survivors may adopt adaptive strategies to enhance their survival chances. For example, some colonial species employ dispersal strategies, spreading individuals out to establish new colonies. The primary motivation for studying random catastrophes in these models is the need to better understand and predict the effects of catastrophic events on population dynamics.

A catastrophe can either decimate an entire population or affect only part of it. To model such events, it is generally assumed that when a catastrophe strikes, population size is reduced according to some probabilistic law. Models like those in \cite{AEL2007, BGR1982, B1986, CairnsPollett, KPR2016} extend traditional birth-death processes by incorporating catastrophic events, with surviving individuals remaining together as a single colony. In particular, Brockwell et al. \cite{BGR1982} introduce birth and death processes involving various types of catastrophes, such as binomial, geometric and uniform catastrophes. Here, the terms uniform, binomial and geometric refer to the probability distributions governing the number of individuals that survive immediately after a catastrophe. These studies primarily explore the probability of extinction and the expected time to extinction.

Several models have been proposed to analyze various types of catastrophes and dispersal mechanisms (see \cite{DJMR2023, JMR2016, JMR2020, JMR2023, MRV2018, MRS2015, S2014}), including binomial and geometric catastrophes. In this work, we study structured populations residing in colonies that experience uniform catastrophes. We analyze two strategies: in the first, after such catastrophic events, the surviving individuals remain together in the same colony. In the second scenario, the survivors disperse in an effort to establish new colonies.

This paper is organized into four sections. In Section~2, we introduce three models describing population growth under uniform catastrophes, considering scenarios both with and without dispersion. We also present results on the survival probability and the mean time to extinction. In Section~3, we discuss these results in detail and compare uniform catastrophes with binomial and geometric catastrophes in models with dispersion. Finally, in Section~4, we provide the proofs of the results presented in Section~2.

\section{Models and Results}
\subsection{Uniform Catastrophes}
Populations are often exposed to catastrophic events that result in the massive elimination of individuals. Uniform catastrophes are sudden, large-scale occurrences that instantaneously and uniformly impact a significant fraction of the population. Typical examples include natural disasters such as wildfires, hurricanes, epidemics, and oil spills. These events can drastically reduce the population size, affecting its recovery capacity and future growth. They often have devastating consequences and are difficult to predict and manage. To model such events, it is assumed that when a population experiences a catastrophe, its size is reduced according to a discrete uniform probability distribution. Specifically, if the population size at the time of the catastrophe is \(i\), it is reduced to \(j\) with probability 
\[ \mu_{ij} = \frac{1}{i}, \hspace{0.5cm} j\in\{0,1,\ldots,i-1\}.\]
The form of \(\mu_{ij}\) represents what is called a \textit{uniform catastrophe}, see Brockwell~\textit{et al}~\cite{BGR1982}.

In the following subsections, we introduce three models in which the population of individuals living in colonies grows according to a Poisson process with rate \( \lambda > 0 \), while uniform catastrophes occur according to a Poisson process with rate 1. Additionally, we assume that each colony starts with a single individual. In the first model, after each catastrophic event, the surviving individuals remain together in the same colony (no dispersion). In the second and third models, the survivors disperse and attempt to establish new colonies. The key difference between these two models lies in spatial constraints: in the second model, certain locations are unavailable for new colonies, whereas in the third model, no such restrictions exist.  The following result characterizes the distribution of the number of individuals that survive a uniform catastrophe.

\begin{prop}\label{distribuiçao_individuos}
Let $N$ be the number of individuals that survive an uniform catastrophe.   
\begin{eqnarray*}
  \mathbb{P}(N=n)&=&\dfrac{1}{\lambda+1}\left(\dfrac{\lambda}{\lambda+1}\right)^n \Phi\left(\dfrac{\lambda}{\lambda+1}, 1, n+1\right),\\
\mathbb{E}(s^N)&=&\dfrac{\ln(1+\lambda(1-s))}{\lambda(1-s)}\quad \text{ and }\\
\mathbb{E}(N)&=&\dfrac{\lambda}{2},  
\end{eqnarray*}

where $\Phi$ is the Lerch Transcendent function, defined by 
\begin{equation}\label{DefLerchFunction}
   \Phi(z, s, a) = \sum_{j=0}^{\infty} \frac{z^j}{(j + a)^s}, \quad |z|<1, \quad a\neq 0,-1,\ldots 
\end{equation}
\end{prop}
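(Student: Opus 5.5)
First identify the population size at the moment of the first catastrophe. The colony starts with one individual. Births occur at rate $\lambda$ and the catastrophe clock at rate $1$, so each competing event is a birth with probability $p=\lambda/(\lambda+1)$ and a catastrophe with probability $q=1/(\lambda+1)$. The population size $X$ just before the catastrophe is therefore geometric:
\[
\mathbb{P}(X=i)=q\,p^{\,i-1},\qquad i\ge 1 .
\]
Given $X=i$, the survivor count $N$ is uniform on $\{0,\dots,i-1\}$. By the law of total probability, for $n\ge 0$,
\[
\mathbb{P}(N=n)=\sum_{i=n+1}^{\infty} q\,p^{\,i-1}\,\frac{1}{i}.
\]
Reindex with $i=n+1+j$, $j\ge 0$. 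This gives $q\,p^{\,n}\sum_{j\ge0} p^{j}/(j+n+1)$, which is exactly $\frac{1}{\lambda+1}\bigl(\frac{\lambda}{\lambda+1}\bigr)^n\Phi\bigl(\frac{\lambda}{\lambda+1},1,n+1\bigr)$ by definition \eqref{DefLerchFunction}. The series converges because $|p|<1$.

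For the generating function, condition on $X$ rather than working with the Lerch form. We have
\[
\mathbb{E}(s^N\mid X=i)=\frac{1}{i}\sum_{j=0}^{i-1}s^j=\frac{1-s^i}{i(1-s)}
\]
for $s\neq1$. Hence
\[
\mathbb{E}(s^N)=\frac{q}{p(1-s)}\sum_{i\ge1}\frac{p^i-(ps)^i}{i}
=\frac{q}{p(1-s)}\Bigl[-\ln(1-p)+\ln(1-ps)\Bigr]
=\frac{q}{p(1-s)}\ln\frac{1-ps}{1-p}.
\]
This uses $\sum_{i\ge1} x^i/i=-\ln(1-x)$ for $|x|<1$, which holds for $|s|\le 1$. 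Now substitute $q/p=1/\lambda$ and $(1-ps)/(1-p)=(\lambda+1-\lambda s)=1+\lambda(1-s)$. This yields the stated formula $\ln(1+\lambda(1-s))/(\lambda(1-s))$.

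For the mean, the simplest route is $\mathbb{E}(N)=\mathbb{E}(\mathbb{E}(N\mid X))=\mathbb{E}((X-1)/2)$. Since $\mathbb{E}(X)=1/q=\lambda+1$, this gives $\lambda/2$. As a cross-check, differentiate the PGF at $s=1$: with $u=\lambda(1-s)$, expanding $\ln(1+u)/u=1-u/2+O(u^2)$ gives derivative $\lambda/2$ at $s=1$.

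The only delicate step is the interchange of sums and logarithms, which is justified by absolute convergence for $|s|\le1$ with $p<1$. The rest is routine. The main point to state carefully is the identification of $X$ as geometric. This rests on the memorylessness of the two competing Poisson clocks, together with the fact that no deaths other than catastrophes occur before the first catastrophe.
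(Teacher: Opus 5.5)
Your proof is correct. Its first step is the paper's computation in different language. The paper conditions on the exponential catastrophe time and integrates the shifted Poisson law of the colony size against $e^{-t}$. That integral is exactly the geometric law $\mathbb{P}(X=i)=\lambda^{i-1}/(\lambda+1)^{i}$, which you obtain from competing exponential clocks. The reindexing into $\Phi\bigl(\tfrac{\lambda}{\lambda+1},1,n+1\bigr)$ is then identical.

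The two routes diverge for the generating function and the mean:
\begin{itemize}
\item \emph{Generating function.} The paper starts from the Lerch form of the pmf and substitutes the integral representation $\Phi(x,1,n+1)=\int_0^\infty e^{-(n+1)t}(1-xe^{-t})^{-1}\,dt$. It then interchanges sum and integral and evaluates the resulting integral in closed form. You instead condition on $X$, use the closed form $(1-s^i)/(i(1-s))$ of the uniform generating function, and sum two logarithmic series.
\item \emph{Mean.} The paper differentiates the closed-form generating function and takes the limit of the derivative as $s\to1^-$. You get $\mathbb{E}(N)=\mathbb{E}((X-1)/2)=\lambda/2$ directly from the tower property.
\end{itemize}

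Your route is shorter. It needs no special-function identity, no sum--integral interchange and no limit of an indeterminate derivative. It also makes the structure transparent: $N$ is a geometric mixture of uniforms. Because $p<1$, your series converges for $|s|<1/p$, so your derivative cross-check at $s=1$ needs no boundary argument. The paper's route has the modest merit of deriving the generating function directly from the Lerch-function pmf as stated, which confirms the two displayed formulas are consistent.
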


See Gradshteyn and Ryzhik \cite[Eq. 9.550]{GR2007} for the definition of the Lerch transcendent function. Sometimes, we use its integral representation \cite[Eq. 9.556]{GR2007}, given by
\begin{equation}\label{IntLerchFunction}
\Phi(z, s, a) = \frac{1}{\Gamma(s)}\int_{0}^{\infty} \frac{t^{s-1} e^{-(a-1)t}}{e^t - z} dt. 
\end{equation}

\subsection{Growth model without dispersion}

Consider a population of individuals residing in a colony that experiences uniform catastrophes. After each catastrophic event, the surviving individuals remain together within the same colony. We assume that the colony starts with a single individual and that it generates new individuals at a rate \(\lambda > 0\), while uniform catastrophes occur at rate 1.  Specifically, we consider the population size (i.e., the number of individuals in the colony) at time \( t \) as a continuous-time Markov process \( \{X(t) : t \geq 0\} \), with $X(0)=1$, which we denote by \( C(\lambda) \). The infinitesimal generator of $X(t)$ is given by  
\[
q_{ij} =
\begin{cases}
\lambda, & j = i + 1, \\
\frac{1}{i}, & j = 0, \dots, i - 1, \\
-(\lambda + 1), & i = j, \\
0, & \text{otherwise}.
\end{cases}
\]

 We say that the process $C(\lambda)$ \textit{survives} if, with positive probability, individuals are present at all times; that is, $X(t) \neq 0$ for all $t > 0$. Otherwise, we say that $C(\lambda)$ \textit{dies out}.
In the following theorem, we prove that the process $C(\lambda)$ becomes extinct almost surely.

\begin{theorem}\label{Th:non-disp}  
The process \( C(\lambda) \) dies out  for all \( \lambda > 0 \).
\end{theorem}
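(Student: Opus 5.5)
Our plan is to show that state $0$ is reached almost surely by comparing the embedded jump chain of $X(t)$ with a simple random walk on the logarithmic scale. The key observation is that every catastrophe multiplies the population by a factor that is essentially uniform on $[0,1]$, and $\mathbb{E}(\ln U)=-1$ for $U$ uniform on $(0,1)$. Growth, on the other hand, only adds one individual at a time. So on the $\log$ scale the catastrophes give a drift of order $-1$ per event, while births add only about $\ln(1+1/i)\approx 1/i$ per event. The drift is therefore eventually negative, and the process should return to small values infinitely often. From small values it hits $0$ with probability bounded below, since from state $i$ the next event is a catastrophe that sends the process to $0$ with probability $\frac{1}{\lambda+1}\cdot\frac1i$.

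We will carry this out through a Lyapunov/Foster argument on the embedded chain $(Y_n)$, where from state $i\ge1$ we have $Y_{n+1}=i+1$ with probability $p=\lambda/(\lambda+1)$ and $Y_{n+1}$ uniform on $\{0,\dots,i-1\}$ with probability $q=1/(\lambda+1)$. Take $V(i)=\ln(1+i)$. Then
\[
\mathbb{E}[V(Y_{n+1})-V(i)\mid Y_n=i]=p\ln\frac{i+2}{i+1}+\frac{q}{i}\sum_{j=0}^{i-1}\ln\frac{j+1}{i+1}.
\]
By Stirling, $\frac1i\ln\frac{i!}{(i+1)^i}\to-1$. The first term is $O(1/i)$. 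Hence there is $K=K(\lambda)$ such that the drift is at most $-q/2<0$ for all $i>K$. Since $V\to\infty$, Foster's criterion shows that the chain returns to the finite set $\{0,1,\dots,K\}$ almost surely from any starting point. Alternatively, one can avoid invoking Foster: the process $V(Y_{n\wedge\tau})+\frac q2 (n\wedge\tau)$ is a supermartingale, where $\tau$ is the hitting time of $\{0,\dots,K\}$, and this gives $\mathbb{E}\tau<\infty$ directly.

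To finish, note that each time the chain is in $\{1,\dots,K\}$, the next step goes to $0$ with probability at least $q/K>0$. The chain visits this set infinitely often unless it is absorbed at $0$. By the strong Markov property (a conditional Borel--Cantelli argument), absorption at $0$ then occurs almost surely. Because jump rates are bounded by $\lambda+1$, the continuous-time process has no explosion, so $X(t)=0$ for some finite $t$ almost surely, i.e.\ $C(\lambda)$ dies out.

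The main obstacle is the drift estimate: bounding $\frac1i\sum_{j=0}^{i-1}\ln\frac{j+1}{i+1}$ uniformly by something like $-1+O(\ln i/i)$. We would do this by comparing the sum with $\int_0^1\ln x\,dx=-1$ via monotonicity, which gives $\frac1i\sum\le \frac{i+1}{i}\int_{1/(i+1)}^{1}\ln x\,dx+\frac{\ln(i+1)}{i}$. That bound is explicit and suffices to choose $K$. An alternative route would be to compute $\mathbb{P}(\text{hit }0)$ exactly via generating functions in the spirit of Proposition~\ref{distribuiçao_individuos}, but the Lyapunov route seems cleaner.
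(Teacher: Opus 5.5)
Your proof is correct, but it takes a different and somewhat heavier route than the paper.

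The paper also applies Foster's criterion to the embedded jump chain, but it uses the linear test function $f(i)=i+1$. The uniform law on $\{0,\dots,i-1\}$ has mean $(i-1)/2$, so the one-step drift is exactly $\frac{\lambda}{1+\lambda}-\frac{i+1}{2(1+\lambda)}$, which tends to $-\infty$. No Stirling or integral-comparison estimate is needed. Ergodicity then gives a finite mean extinction time immediately, and the paper uses that fact later in the Discussion.

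Your choice $V(i)=\ln(1+i)$ matches the multiplicative nature of the catastrophe and gives a drift tending to $-q$. The price is exactly the asymptotic estimate you flag as the main obstacle. Your bound there is fine; the monotone comparison in fact gives $-1+\frac{\ln(i+1)}{i}$ even without your extra term.

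In exchange, you treat the absorbing state $0$ directly, through the hitting-time supermartingale plus a conditional Borel--Cantelli step. The paper instead invokes the ``ergodic if and only if'' form of Foster's theorem for an irreducible aperiodic chain. That implicitly requires making $0$ non-absorbing, and its transition row at $0$ is left incomplete.

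You can also recover the paper's stronger finite-mean conclusion with little extra work. Every exit from $\{0,\dots,K\}$ lands at $K+1$, so each excursion has mean length at most $2\ln(K+2)/q$. Each step spent in $\{1,\dots,K\}$ leads to absorption with probability at least $q/K$, so the number of such steps, and hence of excursions, is dominated by a geometric variable.
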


\subsection{Growth models with dispersion and spatial restriction}
Let $\mathbb{T}_d^+$ be an infinite rooted tree whose vertices
have degree $d+1$, except the root that has degree $d$. Let us define a process with 
dispersion on $\mathbb{T}_d^+$, starting from a single colony placed at the root of $\mathbb{T}_d^+$, with just one individual. The number of individuals in a colony grows following a Poisson process of rate $\lambda>0$. To each colony 
we associate an exponential time of mean 1 that indicates when the uniform catastrophe strikes a colony.
Each individual that survives the catastrophe randomly picks a neighbor vertex between the $d$ neighboring vertices furthest from the root to create new colonies. Among the survivors that go to the same vertex to create a new colony at it, only one succeeds, the others die. So in this case  when a catastrophe occurs in a colony, that colony is replaced by 0,1, ... or $ d $ colonies. Let us denote this process with by $C_d(\lambda)$.

The process \( C_d(\lambda) \) is a continuous-time Markov process with state space \( \{0,1,2,\ldots\}^{\mathbb{T}_d^+} \). We say that the process \emph{survives} if, with positive probability, there are colonies at all times. Otherwise, we say that the process \emph{dies out}. We denote by \( \psi_d \) the extinction probability of the processes  \( C_d(\lambda) \). Using coupling arguments, we can show that  \( \psi_d \) is a non-increasing functions of \( d \) and \( \lambda \). The following results establish conditions on the parameters \(\lambda\) and \(d\) for the survival-extinction transition phase, as well as exact calculations for the extinction probability and the mean time to extinction when \(d=2\) and \(d=3\).

\begin{theorem}\label{condicao_extincao_uni_indepen}
	The process \( C_d(\lambda) \) survives (\( \psi_d < 1 \)) if and only if
	\begin{equation*}
\dfrac{d^2}{d-1}\ln\left(\dfrac{\lambda+d}{d}\right)<\lambda.		
	\end{equation*}
 
\end{theorem}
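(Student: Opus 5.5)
Observe first that the process of colonies in \(C_d(\lambda)\) is embedded in a Galton--Watson branching process. Each colony evolves independently of all the others. The tree structure guarantees that new colonies are created at distinct vertices further from the root, so there is never any interaction between colonies living at different vertices. A colony dies exactly at its first catastrophe, and at that moment it is replaced by a random number \(Y\in\{0,1,\dots,d\}\) of new colonies, each starting with one individual. Hence \(C_d(\lambda)\) survives if and only if this Galton--Watson process is supercritical, that is, if and only if \(\mathbb{E}(Y)>1\). The critical case \(\mathbb{E}(Y)=1\) leads to extinction because \(Y\) is not a.s. equal to \(1\): for instance \(\mathbb{P}(Y=0)\ge \mathbb{P}(N=0)>0\). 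So the proof reduces to computing \(\mathbb{E}(Y)\).

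Given that \(N=n\) individuals survive the catastrophe, each of them independently chooses one of the \(d\) children of the vertex. The number of new colonies is the number of distinct children chosen. The probability that a fixed child is chosen is \(1-(1-1/d)^n\). By linearity,
\[
\mathbb{E}(Y\mid N=n)=d\left(1-\left(\tfrac{d-1}{d}\right)^n\right),
\]
and therefore
\[
\mathbb{E}(Y)=d\left(1-\mathbb{E}\left(s^N\right)\big|_{s=(d-1)/d}\right).
\]
Here the number of survivors \(N\) has the law given in Proposition~\ref{distribuiçao_individuos}, since the colony starts with one individual, grows at rate \(\lambda\) and is struck at rate \(1\). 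We now use the generating function \(\mathbb{E}(s^N)=\ln(1+\lambda(1-s))/(\lambda(1-s))\) from that proposition. With \(1-s=1/d\) this gives
\[
\mathbb{E}(Y)=d-\frac{d^2}{\lambda}\ln\left(\frac{\lambda+d}{d}\right).
\]

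The condition \(\mathbb{E}(Y)>1\) is then equivalent to
\[
\frac{d^2}{\lambda}\ln\left(\frac{\lambda+d}{d}\right)<d-1,
\]
which, after multiplying by \(\lambda/(d-1)>0\), is exactly the stated inequality. This requires \(d\ge 2\). For \(d=1\) we always have \(Y\le 1\) with \(\mathbb{P}(Y=0)>0\), so the process dies out, consistent with the left side being interpreted as \(+\infty\).

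The only step needing care is the justification of the Galton--Watson embedding. One must check that offspring counts of distinct colonies are i.i.d. copies of \(Y\). This holds because the colonies occupy disjoint subtrees: the catastrophe clocks and growth clocks of different colonies are independent, and the choices of the survivors are independent of everything else. The rest is a direct substitution.
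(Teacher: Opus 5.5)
Your proof is correct and follows essentially the same route as the paper. The paper cites a result of Machado et al.\ stating that $\psi_d<1$ if and only if $\mathbb{E}[((d-1)/d)^N]<(d-1)/d$; this is exactly your supercriticality condition $\mathbb{E}(Y)=d\bigl(1-\mathbb{E}[((d-1)/d)^N]\bigr)>1$. The paper then evaluates the generating function of $N$ at $s=(d-1)/d$ just as you do, and the only difference is that you derive the cited criterion yourself via the Galton--Watson embedding and linearity, which makes your argument self-contained.
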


\begin{theorem}\label{uniforme_independente2}
	The process \( C_2(\lambda) \) survives (\( \psi_2 < 1 \)) if and only if
	\[
	4\ln\left(1+\frac{\lambda}{2}\right)<\lambda.
  %\lambda > -4W\left(-\dfrac{1}{2e^{1/2}}\right)-2. 
	\]
 	Additionally,
	\begin{equation*}
		\psi_2=\min\left\{ 1, \dfrac{\ln(\lambda+1)}{\lambda+\ln(\lambda+1)-4\ln\left(1+\lambda/2\right)}\right\}
	\end{equation*}
\end{theorem}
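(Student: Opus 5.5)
The plan is to reduce $C_2(\lambda)$ to a Galton--Watson process whose individuals are colonies. Each colony lives for an exponential time of mean $1$ and then undergoes a uniform catastrophe. At that moment it is replaced by colonies at some of its $2$ children in $\mathbb{T}_2^+$. Since every vertex of the tree has a unique parent, a vertex can only ever be colonized by its parent's survivors. The subtrees rooted at distinct vertices are disjoint, so the new colonies evolve independently and identically, each starting with one individual. Consequently the number of colonies in successive generations is a Galton--Watson process, and $C_2(\lambda)$ survives iff that process survives. The extinction probability $\psi_2$ is then the smallest fixed point in $[0,1]$ of the offspring generating function $f$.

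Next I compute $f$ using Proposition~\ref{distribuiçao_individuos}, where $N$ is the number of survivors of the catastrophe and $G(s)=\mathbb{E}(s^N)=\ln(1+\lambda(1-s))/(\lambda(1-s))$. Conditionally on $N=n$, the offspring count is determined as follows:
\begin{itemize}
\item if $n=0$, there are no new colonies;
\item if $n\ge 1$, there is exactly one new colony when all $n$ survivors choose the same child, which happens with probability $2\cdot 2^{-n}=2^{1-n}$;
\item otherwise there are two new colonies.
\end{itemize}
Set $p_0=\mathbb{P}(N=0)=G(0)=\ln(1+\lambda)/\lambda$ and $a=\mathbb{E}(2^{1-N})=2G(1/2)=4\ln(1+\lambda/2)/\lambda$. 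Then $\mathbb{E}(2^{1-N};N\ge1)=a-2p_0$, and
\[
f(s)=p_0+(a-2p_0)\,s+(1+p_0-a)\,s^2 .
\]

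The survival criterion follows from the mean. We have $f'(1)=(a-2p_0)+2(1+p_0-a)=2-a$. Standard Galton--Watson theory gives survival iff $f'(1)>1$, since the offspring law is nondegenerate because $p_0>0$. The condition $f'(1)>1$ is $a<1$, i.e. $4\ln(1+\lambda/2)<\lambda$, which is the first claim (and agrees with Theorem~\ref{condicao_extincao_uni_indepen} at $d=2$).

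For the extinction probability, $f(s)=s$ is a quadratic with root $s=1$. The product of its roots is $p_0/(1+p_0-a)$, so the other root is
\[
s^*=\frac{p_0}{1+p_0-a}=\frac{\ln(\lambda+1)}{\lambda+\ln(\lambda+1)-4\ln(1+\lambda/2)} .
\]
This requires $1+p_0-a>0$, which holds because it equals $\mathbb{P}(\text{two new colonies})$, a probability that is positive for $\lambda>0$. In the supercritical case $s^*<1$ and $\psi_2=s^*$. Otherwise $\psi_2=1$ and $s^*\ge 1$, which gives the formula with the minimum.

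The step I expect to need the most care is the rigorous reduction to a Galton--Watson process. One must argue that the colonies' continuous-time evolutions are independent, that no collision between different lineages can occur on the tree, and that survival of the continuous-time process is equivalent to nonextinction of the generation counts. The last point holds because each colony lives an a.s.\ finite time and there are finitely many colonies per generation. The algebra is routine once $G(0)$ and $G(1/2)$ are read off from Proposition~\ref{distribuiçao_individuos}.
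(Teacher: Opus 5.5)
Your proposal is correct and is essentially the paper's proof. The paper takes the branching-process reduction, the survival criterion $\mathbb{E}(2^{-N})<1/2$ (your $a<1$) and the fixed-point equation $f(s)=s$ from Propositions~4.3--4.4 of Machado et al., then factors $f(s)-s=(1-s)\bigl[p_0(1-s)+2s\,\mathbb{E}(2^{-N})-s\bigr]$, which is your product-of-roots step; your version is more self-contained because you derive $f$ directly and check $p_2=1+p_0-a>0$ and $s^*\ge 1$ in the non-supercritical case, points the paper leaves implicit.
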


\begin{theorem}\label{uniforme_independente3}
The process \( C_3(\lambda) \) survives (\( \psi_3 < 1 \)) if and only if
\[
\dfrac{9}{2} \ln\left(1+\dfrac{\lambda}{3}\right)<\lambda.
\]
%\[\lambda > -\dfrac{9}{2} W\left(-\dfrac{1}{3e^{2/3}}\right) - 3.\]
Additionally,
\[
\psi_3 = \min\left\{1,  \frac{1 + 2a - 9b + \sqrt{(1 - 9b)^2 + 4a(2 - 9c)}}{2(a - 9b + 9c - 1)} \right\},
\]
where
\[
a = \frac{\ln(1+\lambda)}{\lambda}, \quad
b = \frac{1}{2\lambda}\ln\!\left(1+\frac{2\lambda}{3}\right), \quad
c = \frac{1}{\lambda}\ln\!\left(1+\frac{\lambda}{3}\right).
\]
 
\end{theorem}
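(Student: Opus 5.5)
The plan is to reduce $C_3(\lambda)$ to a Galton--Watson branching process in which the ``individuals'' are colonies. On $\mathbb{T}_3^+$ every colony sends its offspring to the three vertices farther from the root, and distinct colonies never compete for the same vertex. Their lifetimes, Poisson growth and catastrophe clocks are independent. So the number $Y$ of new colonies founded when a colony is hit by its catastrophe is i.i.d.\ across colonies, and $\psi_3$ is the smallest root in $[0,1]$ of $f(s)=s$, where $f(s)=\sum_{k=0}^3 p_k s^k$ and $p_k=\mathbb{P}(Y=k)$. (Survival of the colony process is equivalent to non-extinction of this embedded Galton--Watson tree, since each colony dies a.s.\ in finite time.)

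Next I compute the $p_k$ from Proposition~\ref{distribuiçao_individuos}, using $G(x)=\mathbb{E}(x^N)=\ln(1+\lambda(1-x))/(\lambda(1-x))$. Given $N=n\ge1$ survivors, each choosing uniformly among $3$ vertices, the number of distinct occupied vertices satisfies
\[
\mathbb{P}(Y=1\mid N=n)=3(1/3)^n,\qquad \mathbb{P}(Y=2\mid N=n)=3(2/3)^n-6(1/3)^n .
\]
Moreover $Y=0$ exactly when $N=0$. Averaging over $N$ and noting that $G(0)=a$, $G(1/3)=3b$ and $G(2/3)=3c$ gives
\[
p_0=a,\quad p_1=9b-3a,\quad p_2=9c+3a-18b,\quad p_3=1-a+9b-9c.
\]
The mean offspring number is then $m=p_1+2p_2+3p_3=3-9c$. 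The standard Galton--Watson criterion says survival holds if and only if $m>1$, that is, $9c<2$, which is exactly $\tfrac92\ln(1+\lambda/3)<\lambda$. This also agrees with Theorem~\ref{condicao_extincao_uni_indepen} at $d=3$. I should also check that the process is nondegenerate ($p_1<1$), so that $m=1$ gives extinction.

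For the explicit value, I factor $f(s)-s=(s-1)\,Q(s)$ with $Q(s)=p_3s^2+(p_2+p_3)s-p_0$, using $f(1)=1$. When $m>1$, $\psi_3$ is the unique root of $Q$ in $[0,1)$. Since $Q(0)=-a<0$ and $Q(1)=m-1>0$, this is the positive root
\[
s=\frac{-(p_2+p_3)+\sqrt{(p_2+p_3)^2+4p_0p_3}}{2p_3},
\]
assuming $p_3>0$. I then substitute $p_2+p_3=1+2a-9b$ and $p_3=1-a+9b-9c$, and expand the discriminant. Rationalizing should turn this into the stated form with denominator $2(a-9b+9c-1)$. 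Indeed, multiplying numerator and denominator by $-1$ gives $\frac{(1+2a-9b)+\sqrt{\cdot}}{2(a-9b+9c-1)}$, which matches once the discriminant simplifies to $(1-9b)^2+4a(2-9c)$.

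The main obstacle is this last algebraic step: verifying that $(1+2a-9b)^2+4a(1-a+9b-9c)$ equals $(1-9b)^2+4a(2-9c)$. Expanding, the $a^2$ terms cancel and the $ab$ terms cancel as well, so the identity should come out directly. I also need to handle the sign conventions carefully to confirm that the chosen root is the one in $[0,1)$ and not the negative one. The $\min\{1,\cdot\}$ takes care of the regime $m\le1$.
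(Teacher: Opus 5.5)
Your reduction to a Galton--Watson process on colonies is correct, and so are the values $p_0=a$, $p_1=9b-3a$, $p_2=9c+3a-18b$, $p_3=1-a+9b-9c$, the mean $m=3-9c$, the factorization $f(s)-s=(s-1)Q(s)$ and the discriminant identity. This is essentially the paper's route. The paper quotes Propositions 4.3 and 4.4 of Machado et al.\ instead of computing $m$ directly, but it lands on the same cubic $(1-s)^3a+9s(1-s)^2b+9s^2(1-s)c=s(1-s^2)$ and the same quadratic. Your survival condition with $9/2$ is right; the $9/8$ in the paper's proof is a misprint.

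The gap is the sign step you postponed. Write $D=(p_2+p_3)^2+4p_0p_3=(1-9b)^2+4a(2-9c)$. Multiplying numerator and denominator of $s_+=\frac{-(p_2+p_3)+\sqrt{D}}{2p_3}$ by $-1$ gives $\frac{(1+2a-9b)-\sqrt{D}}{2(a-9b+9c-1)}$, not $+\sqrt{D}$.

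The expression printed in the statement is in fact wrong. For every $\lambda>0$ we have $1+2a-9b=p_2+p_3>0$ and $a-9b+9c-1=-p_3<0$. So the printed expression equals $\frac{-(p_2+p_3)-\sqrt{D}}{2p_3}$, which is the \emph{negative} root of $Q$. For instance, at $\lambda=10$ it is about $-1.93$, whereas $\psi_3\approx 0.349$; the $\min\{1,\cdot\}$ then returns a negative number. The paper's proof only lists both roots with ``$\pm$'' and never selects one, so the $+$ in the statement is a sign typo. You cannot honestly ``match'' it. What your argument actually establishes, and what you should state, is
\[
\psi_3=\min\left\{1,\ \frac{1+2a-9b-\sqrt{(1-9b)^2+4a(2-9c)}}{2(a-9b+9c-1)}\right\}.
\]

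To justify the $\min$, also note that $p_3>0$: three survivors occupy all three children with probability $6/27$, and $\mathbb{P}(N=3)>0$. Hence $Q$ is convex with $Q(0)=-a<0$. It follows that $s_+\ge 1$ exactly when $Q(1)=m-1\le 0$, which is the extinction regime.
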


In the following result, we obtain the mean time to extinction when the processes \( C^i_d(\lambda) \) almost surely die out, for \( d=2 \) and \( d=3 \). 

\begin{theorem}\label{esperanca_indepen}
Let \( \tau_d \) be the extinction time of the process \( C_d(\lambda) \).

\begin{enumerate}
\item If \( 4\ln\left(1+\frac{\lambda}{2}\right)>\lambda \), then 
\begin{align*}
			\mathbb{E}[\tau_2]&=\dfrac{1}{\beta}\ln\left(\dfrac{\alpha}{\alpha-\beta}\right).
		\end{align*}
where 
\[
\alpha = \frac{\ln(\lambda + 1)}{\lambda} \quad \text{and} \quad
\beta = 1 - \frac{1}{\lambda}\,\ln\!\left(\frac{(\lambda + 2)^4}{16(\lambda + 1)}\right).
\] 
If \( 4\ln\left(1+\frac{\lambda}{2}\right)=\lambda \), then \( \mathbb{E}[\tau_2] = \infty \).\\
    
    \item If \( \frac{9}{2} \ln\left(1+\frac{\lambda}{3}\right)>\lambda \), then 
    \[\mathbb{E}[\tau_3]=\dfrac{1}{\sqrt{4\alpha \gamma+\theta^2}}\ln\left[ \dfrac{2\alpha-\theta+\sqrt{4\alpha \gamma+\theta^2}}{2\alpha-\theta-\sqrt{4\alpha \gamma+\theta^2}} \right],\]
  where \[
\theta =1 - \frac{1}{2\lambda}\,\ln\!\left(\frac{(2\lambda + 3)^9}{27^3(\lambda + 1)^4 }\right)  \quad \text{and} \quad
\gamma = 1 - \frac{1}{2\lambda}\,\ln\!\left(\frac{(\lambda + 3)^{18}(\lambda + 1)^2}{27^3(2\lambda + 3)^9}\right).
\] 
   If \( \dfrac{9}{2} \ln\left(1+\dfrac{\lambda}{3}\right)=\lambda\), then \( \mathbb{E}[\tau_3] = \infty \).
\end{enumerate}
 
\end{theorem}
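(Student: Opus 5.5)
We would reduce the problem to a classical continuous-time Markov branching process and compute $\mathbb{E}[\tau_d]$ from the backward Kolmogorov equation. On $\mathbb{T}_d^+$ the colonies never interact: each new colony sits on a fresh vertex further from the root. So the number of colonies $Z(t)$ is a Markov branching process. Each colony lives an $\mathrm{Exp}(1)$ time and is then replaced by $Y$ colonies, $Y\in\{0,\dots,d\}$. Here $Y$ is the number of distinct children chosen when the $N$ survivors of Proposition~\ref{distribuiçao_individuos} each pick one of the $d$ children uniformly. Let $p_k=\mathbb{P}(Y=k)$ and $f(s)=\sum_k p_k s^k$. Then $F(t)=\mathbb{P}(\tau_d\le t)$ solves $F'=f(F)-F$ with $F(0)=0$. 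In the a.s.-extinction regime $F(t)\uparrow 1$, and we obtain
\[
\mathbb{E}[\tau_d]=\int_0^\infty (1-F(t))\,dt=\int_0^1 \frac{1-x}{f(x)-x}\,dx .
\]

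\textbf{Computing the $p_k$.} Condition on $N$ and use inclusion--exclusion. Then $\mathbb{P}(Y\le$ a given set of $j$ children$)$ involves $(j/d)^N$, and all $p_k$ are linear combinations of $\mathbb{E}[(j/d)^N]=\frac{\ln(1+\lambda(1-j/d))}{\lambda(1-j/d)}$, with $j=0,\dots,d$, from Proposition~\ref{distribuiçao_individuos}. Note that $p_0=\mathbb{P}(N=0)=\alpha=\ln(1+\lambda)/\lambda$.

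\textbf{Case $d=2$.} We get $p_1=2\bigl(\mathbb{E}[2^{-N}]-\alpha\bigr)=\frac{4}{\lambda}\ln(1+\lambda/2)-2\alpha$ and $p_2=1-p_0-p_1=1+\alpha-\frac4\lambda\ln(1+\lambda/2)$. This $p_2$ equals $\beta$ after simplifying the logarithms. Since $f(1)=1$, we have $f(x)-x=(1-x)(p_0-p_2x)$, and so
\[
\mathbb{E}[\tau_2]=\int_0^1\frac{dx}{\alpha-\beta x}=\frac1\beta\ln\frac{\alpha}{\alpha-\beta}.
\]
The mean offspring is $f'(1)=p_1+2p_2=2-2\alpha+\dots$, and $f'(1)\le 1$ reduces to $\beta\le\alpha$, i.e.\ $4\ln(1+\lambda/2)\ge\lambda$; this is consistent with Theorem~\ref{uniforme_independente2}. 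In the critical case $\alpha=\beta$ the integrand is $1/(\alpha(1-x))$, which is not integrable, so $\mathbb{E}[\tau_2]=\infty$.

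\textbf{Case $d=3$.} The same steps apply, with $p_1=3\bigl(\mathbb{E}[3^{-N}]-\alpha\bigr)$ and $p_2=3\bigl(\mathbb{E}[(2/3)^N]-2\mathbb{E}[3^{-N}]+\alpha\bigr)$, so $p_3=1-p_0-p_1-p_2$. We factor
\[
f(x)-x=(1-x)Q(x),\qquad Q(x)=p_0-(p_2+p_3)x-p_3x^2 .
\]
Then $\mathbb{E}[\tau_3]=\int_0^1 dx/Q(x)$, which we evaluate by partial fractions over the two real roots of $Q$. Writing $Q(x)=\alpha-\theta x-\eta x^2$ and using $\int_0^1\frac{dx}{\alpha-\theta x-\eta x^2}$, the result is a logarithm with prefactor $1/\sqrt{\theta^2+4\alpha\eta}$. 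We then match $\eta$ (and hence $\theta$ and $\gamma$) with the stated expressions; in particular $\theta$ should be identified with $p_2+p_3$ and $\gamma$ with $p_3$ up to the normalisation in the formula.

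\textbf{Critical case and main obstacle.} At criticality $Q(1)=p_0-p_2-2p_3=1-f'(1)=0$. Hence $Q$ has a simple zero at $1$ (as $Q'(1)\neq0$ by convexity of $f$), and the integral diverges logarithmically. The extinction condition $f'(1)\le1$ again reduces to $\frac92\ln(1+\lambda/3)\ge\lambda$, in agreement with Theorem~\ref{uniforme_independente3}. The main obstacle is the algebra: simplifying the inclusion--exclusion coefficients into the compact logarithmic forms of $\theta$ and $\gamma$, and getting the orientation of the root ratio inside the logarithm right. We would do this by checking that $Q(0)=\alpha$, that $Q(1)>0$ in the subcritical regime, and by checking the formula in the small-$\lambda$ limit.
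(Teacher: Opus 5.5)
Your proposal is correct and follows the paper's route. You reduce $C_d(\lambda)$ to a Markov branching process whose offspring law $p_k$ comes from the law of $N$ via surjection counting (inclusion--exclusion), and then evaluate the mean extinction time; the only difference is that you derive $\mathbb{E}[\tau_d]=\int_0^1\frac{1-x}{f(x)-x}\,dx$, the factorization $f(x)-x=(1-x)Q(x)$ and the partial fractions yourself, whereas the paper cites Lemma~4.1 of Junior et al.\ for these closed forms.

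The identification you hedge on is exact, with no extra normalisation. Take $\theta=p_2+p_3$, $\gamma=p_3$ and $D=\sqrt{\theta^2+4\alpha\gamma}$. Partial fractions then give exactly $\frac{1}{D}\ln\frac{2\alpha-\theta+D}{2\alpha-\theta-D}$, and $1-p_0-p_1$ and $1-p_0-p_1-p_2$ simplify to the stated logarithmic expressions for $\theta$ and $\gamma$. Your expression for $f'(1)$ in the case $d=2$ is garbled; the correct value is $1-\alpha+\beta=2-\frac{4}{\lambda}\ln(1+\lambda/2)$, which is harmless since your conclusion $\beta\le\alpha$ is right.
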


\begin{obs}
We define the critical parameter for the survival–extinction phase transition by
\[
    \lambda_d \;=\; \inf\{\lambda>0 : C_d(\lambda) \ \text{survives}\}.
\]
Numerical approximations derived from Theorem~\ref{condicao_extincao_uni_indepen} yield the values reported in Table~\ref{table:lambda_d}. These values illustrate the monotone decrease of \( \lambda_d \) as a function of the dimension \( d \), as well as the convergence of the critical parameter toward the limiting value \( \lambda_\infty = 2 \) when \( d \to \infty \). This limiting behavior is consistent with the model with dispersion but without spatial constraints, which we present in the following section.

\begin{table}[h]
\centering
\begin{tabular}{|c|ccccccccc|}
\hline
$d$
& 2 & 3 & 5 & 7 & 10 & 20 & 50 & 100 & 200 \\
\hline
$\lambda_d$
& 5.026 & 3.432 & 2.693 & 2.456 & 2.302 & 2.133 & 2.053 & 2.027 & 2.013 \\
\hline
\end{tabular}
\caption{Critical parameters, rounded to three decimal places.}
\label{table:lambda_d}
\end{table}
\end{obs}

\subsection{Growth model with dispersion but no spatial restrictions.} 
Consider a population of individuals divided into separate colonies. Each colony begins with  
an individual. The number of individuals in each colony increases independently according 
to a Poisson process of rate $\lambda > 0 $.
To each colony we associate an exponential time of mean 1 that indicates when the uniform catastrophe strikes a colony. Each individual that survived the catastrophe 
begins a new colony independently of everything else.
We denote this process by $C_*(\lambda)$ and consider it starting from a single colony with just one individual.\\

We say that the process $C_*(\lambda)$ {\it survives}  if there is a positive probability that colonies exist at any time. Otherwise, we say that the process {\it dies out }.

\begin{theorem}\label{th:comdisptime}
Let \( \psi_* \) and \( \tau_* \) denote the extinction probability and the extinction time of the process \( C_*(\lambda) \), respectively.

\begin{enumerate}

    \item For \( \lambda > 2 \), the extinction probability \( \psi_* \) is given by the smallest non-negative solution of
    \[
        \ln\!\bigl(1+\lambda(1-s)\bigr) = \lambda s(1-s).
    \]
    For \( \lambda \le 2 \), the process becomes extinct almost surely, and thus \( \psi_* = 1 \).

    \item For \( \lambda \le 2 \), the expected extinction time is
    \[
        \mathbb{E}[\tau_*]
        = \frac{1}{\lambda}\int_{0}^{\lambda}
        \frac{x^{2}}{\lambda \ln(1+x) - x(\lambda - x)}\, dx .
    \]

\end{enumerate}
\end{theorem}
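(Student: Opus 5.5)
$C_*(\lambda)$ is a continuous-time Markov branching process in which colonies play the role of particles. Each colony lives for an $\mathrm{Exp}(1)$ time, independently of everything else. When the catastrophe strikes, the colony is replaced by $N$ new colonies, one per survivor, each starting with a single individual. The law of $N$ is the one in Proposition~\ref{distribuiçao_individuos}, with generating function
\[
f(s)=\mathbb{E}(s^N)=\frac{\ln(1+\lambda(1-s))}{\lambda(1-s)} .
\]
The independence of colonies and the fact that each new colony starts afresh from one individual give the branching property. So the plan is to read both statements off standard branching-process theory applied to this $f$.

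For part (1), I look at the embedded Galton--Watson process: the number of colonies in successive generations, where generation $k+1$ consists of the colonies founded by survivors of catastrophes in generation $k$. The continuous-time process dies out if and only if this discrete process does. Hence $\psi_*$ is the smallest non-negative root of $f(s)=s$. Multiplying through by $\lambda(1-s)$ for $s<1$ turns this into $\ln(1+\lambda(1-s))=\lambda s(1-s)$; the point $s=1$ is always a root. By Proposition~\ref{distribuiçao_individuos}, $\mathbb{E}(N)=f'(1)=\lambda/2$. Note also that $\mathbb{P}(N=0)=\ln(1+\lambda)/\lambda\in(0,1)$, so $N$ is non-degenerate. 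The classical criterion then gives $\psi_*<1$ exactly when $\lambda>2$, and $\psi_*=1$ when $\lambda\le 2$.

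For part (2), let $q(t)=\mathbb{P}(\tau_*\le t)$ and $u(s)=f(s)-s$. Because the lifetimes are exponential with rate $1$, the Kolmogorov backward equation gives
\[
q'(t)=u(q(t)),\qquad q(0)=0 .
\]
Non-explosion holds because $\mathbb{E}(N)<\infty$. When $\lambda\le 2$, convexity of $f$ together with $f(1)=1$ and $f'(1)\le1$ gives $u(s)>0$ on $[0,1)$. So $q$ increases strictly from $0$ to $1$, and I can change variables $s=q(t)$, $dt=ds/u(s)$:
\[
\mathbb{E}[\tau_*]=\int_0^\infty (1-q(t))\,dt=\int_0^1\frac{1-s}{u(s)}\,ds .
\]
Next I substitute $x=\lambda(1-s)$, so $1-s=x/\lambda$ and $ds=-dx/\lambda$. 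This gives
\[
u=\frac{\ln(1+x)}{x}-1+\frac{x}{\lambda}=\frac{\lambda\ln(1+x)-x(\lambda-x)}{\lambda x},
\]
and therefore
\[
\frac{1-s}{u}=\frac{x^2}{\lambda\ln(1+x)-x(\lambda-x)} .
\]
With the factor $1/\lambda$ from $ds$ and the limits $x\in[0,\lambda]$, this is exactly the stated formula.

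As a consistency check, near $x=0$ the denominator is $(1-\lambda/2)x^2+\lambda x^3/3+O(x^4)$. The integrand is therefore bounded when $\lambda<2$ and behaves like $3/(2x)$ when $\lambda=2$. So the formula gives $\mathbb{E}[\tau_*]=\infty$ in the critical case, as expected.

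The main point needing care is the rigorous reduction to a Markov branching process, which is what justifies the backward ODE for $q$. The key inputs are that each colony's lifetime is independent of its internal growth and is $\mathrm{Exp}(1)$, and that the offspring count is determined solely by the colony's size at that time. This is precisely what makes $N$ have the law computed in Proposition~\ref{distribuiçao_individuos}. Beyond that, only a routine justification of the interchange $\int_0^\infty(1-q)\,dt=\int_0^1(1-s)/u\,ds$ is needed, and monotone convergence covers it.
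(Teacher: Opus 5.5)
Part (1) is fine and matches the paper. The generation-by-generation process is Galton--Watson with offspring law $N$, and that argument uses only the marginal law of $N$.

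Part (2) has a genuine gap at the step ``the Kolmogorov backward equation gives $q'(t)=u(q(t))$''. That equation holds for a Markov branching process, where a particle's number of offspring is independent of its lifetime. Here it is not. A colony struck at age $L$ has size $X=1+\mathrm{Poisson}(\lambda L)$, and its number of survivors is uniform on $\{0,\dots,X-1\}$, so long-lived colonies leave more offspring. The inputs you cite (lifetime independent of internal growth, offspring determined by the size at death) establish this dependence; they do not rule it out.

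You can see the ODE fail at $t=0$. A colony struck at small age has size $1$ and leaves no survivors, so $q'(0)=1$. The ODE instead gives $u(q(0))=f(0)=\ln(1+\lambda)/\lambda<1$. The correct equation is the age-dependent one
\[
q(t)=\int_0^t e^{-v}\,g_v\bigl(q(t-v)\bigr)\,dv,\qquad g_v(s)=\mathbb{E}\Bigl[\frac{1-s^{X_v}}{X_v(1-s)}\Bigr],\quad X_v=1+\mathrm{Poisson}(\lambda v).
\]
Equivalently, you can use a multitype system indexed by colony size. This is not an autonomous ODE in $q$, so the substitution $s=q(t)$ is unavailable.

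The problem is not only a missing justification; the conclusion is affected. Write $\tau_*=L+\max_{i\le N}\tau^{(i)}$, with the empty maximum equal to $0$. Then $\mathbb{E}[\tau_*]=1+\mathbb{E}[h(N)]$, where $h(n)$ is the mean of the maximum of $n$ i.i.d. copies of $\tau_*$. The terms with $n\ge 2$ depend on the whole law of $\tau_*$, not just on $f$.

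A first-order expansion in $\lambda$ gives $\mathbb{P}(\tau_*>t)\approx e^{-t}(1+\lambda t^2/4)$. For the Markov surrogate with independent $\mathrm{Exp}(1)$ lifetimes it is $e^{-t}(1+\lambda t/2)$. Both have mean $1+\lambda/2$, but the true one makes $h(2)$ larger by about $\lambda/8$. Since $\mathbb{P}(N=2)\approx\lambda^2/3$, the true mean extinction time exceeds the displayed integral by roughly $\lambda^3/24$ for small $\lambda$.

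The paper's proof makes the same identification when it applies Narayan's formula for Markov branching processes, so following it does not close the gap. The integral is the mean extinction time of the Markov branching process with offspring generating function $f$. A statement about $C_*(\lambda)$ itself has to come from the age-dependent equation above, or else the model must be changed so that the number of survivors is independent of the colony's lifetime.
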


\section{Discussion}

\subsection{Dispersion as a survival strategy.}

The results obtained in this work highlight several qualitative features of growth models subject to catastrophic events and allow us to assess the effectiveness of dispersion as a mechanism that enhances long-term population persistence. Theorems~\ref{Th:non-disp}, \ref{condicao_extincao_uni_indepen}, \ref{esperanca_indepen}, and \ref{th:comdisptime} show that dispersion plays a fundamental role in preventing extinction under uniform catastrophes. In the absence of dispersion (Theorem~\ref{Th:non-disp}), the population becomes extinct almost surely for every value of the growth rate~$\lambda$, and this extinction takes place in finite mean time. When dispersion is allowed, however, the population may survive with positive probability, depending on the interaction between the growth rate~$\lambda$ and the spatial structure of the environment.

For populations living on homogeneous trees (Theorems~\ref{condicao_extincao_uni_indepen}--\ref{esperanca_indepen}), dispersion becomes an effective strategy for achieving a positive probability of survival whenever the parameters \(d\) and \(\lambda\) satisfy the inequality characterized in Theorem~\ref{condicao_extincao_uni_indepen}. The explicit computations for \(d = 2\) and \(d = 3\) (Theorems~\ref{uniforme_independente2}, \ref{uniforme_independente3} and \ref{esperanca_indepen}) further illustrate how dispersion reduces the risk of extinction by increasing both the survival probability and the mean time to extinction.

When spatial constraints are removed (Theorem~\ref{th:comdisptime}), dispersion becomes even more efficient. In this regime, survival is possible precisely when $\lambda > 2$, and the critical value can be computed explicitly. The monotone convergence of the critical parameters $\lambda_d$ toward~2 as $d \to \infty$ (Table~\ref{table:lambda_d}) shows that high-dimensional environments behave similarly to the unrestricted model, further reinforcing the idea that dispersion substantially mitigates the impact of uniform catastrophes. Altogether, these results demonstrate that dispersion is a robust mechanism for extending population persistence and increasing resilience to catastrophic events.

\subsection{A comparative analysis of catastrophe types.}

We complement our analysis by comparing uniform catastrophes with geometric and binomial types of catastrophes in models with dispersion but no spatial restrictions. For each class of catastrophes, the corresponding extinction probabilities can be computed explicitly, which allows us to quantify the impact that each type of catastrophic event has on population persistence.

$\bullet$ \textit{Model with geometric catastrophes.}
Consider the model described in Section~2.4, but instead of uniform catastrophes we now assume geometric catastrophes. That is, if the colony size at the time of the catastrophe is $i$ individuals, then $j$ individuals survive with probability
\[
    \mu_{ij}^G =
    \begin{cases}
        (1-p)^{\,i}, & j = 0,\\[4pt]
        p\, (1-p)^{\,i-j}, & 1 \le j \le i.
    \end{cases}
\]
For this model, the probability of extinction is given by
\[
    \psi_G(\lambda,p)
    = \min\left\{
        \frac{(1-p)(\lambda+1)}{\lambda(1+\lambda p)},\; 1
      \right\},
\]
see \cite[Theorem~2.4 and Remark~2.6]{JMR2016}.\\

In the case of uniform catastrophes, Theorem~\ref{th:comdisptime} shows that the extinction probability is given by
\[
\psi_*(\lambda)=
\begin{cases}
1, & \lambda < 2, \\[1em]
\min\{\,s>0 : \ln[1+\lambda(1-s)]=\lambda s(1-s)\,\}, & \lambda \ge 2.
\end{cases}
\]
In contrast with the geometric case, represented by $\psi_G(\lambda,p)$, Figure~\ref{figura1} illustrates that uniform catastrophes are more severe than geometric catastrophes, leading to a substantially higher extinction probability and making population survival more difficult.

\begin{figure}[ht]
	\includegraphics[trim={0cm 0cm 0cm 0cm}, clip, width=15cm]{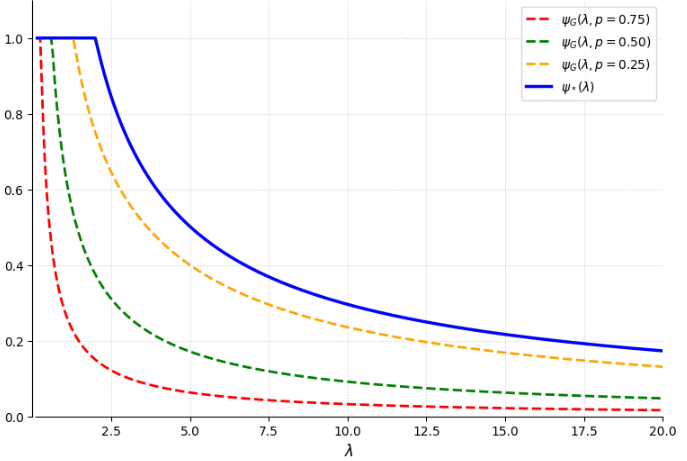}	\caption{Comparison between extinction probabilities in models with uniform catastrophes and geometric catastrophes.} 
	\label{figura1}
\end{figure}

$\bullet$ \textit{Model with binomial catastrophes.}
Consider the model described in Section~2.4, but instead of uniform catastrophes we now assume binomial catastrophes. That is, if the colony size at the time of the catastrophe is $i$ individuals, then $j$ individuals survive with probability
\[ \mu_{ij}^B ={ i \choose j} p^j (1-p)^{i-j}, \ 0\leq j\leq i.\]
For this model, the probability of extinction is given by
\[\psi_B(\lambda,p) = \min\left\{ \frac{1-p}{\lambda p},\, 1 \right\}\]
see \cite[Theorem~2.4 and Remark~2.6]{JMR2016}.

In contrast with the uniform case, represented by $\psi_*(\lambda)$, Figure~\ref{figura2} shows that uniform catastrophes are more severe than binomial catastrophes whenever $p \ge 1/3$. For $p < 1/3$, the curves suggest the existence of a phase transition in the parameter~$\lambda$: if $\lambda \le \lambda_{\bullet}(p)$, uniform catastrophes appear to be less severe than binomial catastrophes, whereas if $\lambda > \lambda_{\bullet}(p)$, uniform catastrophes become more severe than their binomial counterparts. For $p = 0.25$, numerical computations indicate that $\lambda_{\bullet}(0.25) \approx 10.58$.

\begin{figure}[ht]
	\includegraphics[trim={0cm 0cm 0cm 0cm}, clip, width=15cm]{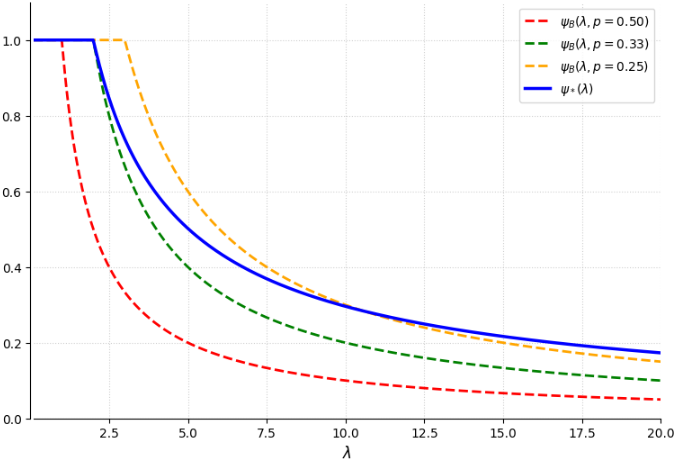}	\caption{ Comparison between extinction probabilities in models with uniform catastrophes and binomial catastrophes.} 
	\label{figura2}
\end{figure}

\section{Proofs}
We begin with the proof of Proposition~\ref{distribuiçao_individuos}. In Section 3.1, we establish the results for the model without dispersion. In Section 3.2, we derive the results for the model with dispersion and spatial constraints. Finally, in Section 3.4, we prove the results for the model with dispersion and no spatial constraints.

\begin{proof}[Proof of Proposition~\ref{distribuiçao_individuos}]
Let $X_t$ be the number of individuals in a colony at time $t$ after its creation.  Since $X_0=1$ and the population grows according to a Poisson process of rate $\lambda$, we have that
\begin{equation*}
\mathbb{P}(X_t=j)=\dfrac{e^{-\lambda t}(\lambda t)^{j-1}}{(j-1)!}, \ \ j\in\{1,2,\ldots\}.
\end{equation*}
When a uniform catastrophe occurs in the colony, it affects the individuals such that, if there are $j$ individuals at the time of the catastrophe, with probability $1/j$, either 0, 1, 2, ..., or $j-1$ individuals survive. Thus, the number of survivors at time $T$ (catastrophe time) in that colony satisfies
	\begin{equation*}
		\mathbb{P}(N=n|X_T=j)=\dfrac{1}{j}, \ \ \ n\in\{0,1,2,\ldots, j-1\}.
	\end{equation*}
Next we consider the distribution of the number of survivals at catastrophe times.	
 
	\begin{eqnarray}\label{DensityN}
		\mathbb{P}(N=n)&=&\int_0^\infty e^{-t}\sum_{j=n+1}^\infty \dfrac{e^{-\lambda t}(\lambda t)^{j-1}}{(j-1)!}\dfrac{1}{j} dt \nonumber\\
		%&=&\int_0^\infty e^{-t}\sum_{j=n}^\infty \dfrac{e^{-\lambda t}(\lambda t)^j}{(j+1)!}dt\nonumber\\
		&=&\sum_{j=n}^\infty \dfrac{\lambda^j}{(j+1)!}\int_0^\infty e^{-t(\lambda+1)}t^j dt \nonumber\\
		%&=&\sum_{j=n}^\infty \dfrac{\lambda^j}{(j+1)!}\dfrac{\Gamma(j+1)}{(\lambda+1)^{j+1}} \nonumber\\
		%&=&\dfrac{1}{\lambda+1}\sum_{j=n}^\infty \dfrac{1}{j+1}\left(\dfrac{\lambda}{\lambda+1}\right)^j \nonumber\\
        &=&\dfrac{1}{\lambda+1}\left(\dfrac{\lambda}{\lambda+1}\right)^n\sum_{j=0}^\infty \dfrac{1}{j+n+1}\left(\dfrac{\lambda}{\lambda+1}\right)^j \nonumber\\
        &=&\dfrac{1}{\lambda+1}\left(\dfrac{\lambda}{\lambda+1}\right)^n \Phi\left(\dfrac{\lambda}{\lambda+1}, 1, n+1\right).
		\end{eqnarray}
When $n=0$, the last line is equals to $\dfrac{\ln(\lambda+1)}{\lambda}$, which is obtained using the identity
$\ln(1-x)=-\sum_{j=1}^\infty \frac{x^{j}}{j},$ for $|x|<1.$ Thus, for the calculation of the moment generating function of \( N \), we have that  

\begin{equation}\label{fgm_aux}
    \mathbb{E}(s^N) =\dfrac{\ln(\lambda+1)}{\lambda}+\dfrac{1}{\lambda+1}\sum_{n=1}^\infty  \left(\dfrac{s \lambda}{\lambda+1}\right)^n \Phi\left(\dfrac{\lambda}{\lambda+1}, 1, n+1\right).
\end{equation}

Letting \( x = \dfrac{\lambda}{\lambda+1} \) and using the integral representation of the Lerch function (\ref{IntLerchFunction}), note that

	\begin{align*}
		\sum_{n=1}^\infty  \left(\dfrac{s \lambda}{\lambda+1}\right)^n \Phi\left(\dfrac{\lambda}{\lambda+1}, 1, n+1\right)
		&=\sum_{n=1}^\infty s^nx^n\Phi(x, 1, n+1)\\
		&=\sum_{n=1}^\infty (sx)^n\int_0^\infty \dfrac{e^{-(n+1)t}}{1-xe^{-t}}dt\\
		&=\int_0^\infty \dfrac{e^{-t}}{1-xe^{-t}}\sum_{n=1}^\infty (xse^{-t})^n dt\\
		&=\dfrac{s\ln(1-x)-\ln(1-sx)}{(s-1)x}.
	\end{align*}
 Thus, substituting this into (\ref{fgm_aux}) and simplifying, we obtain
	\begin{equation*}
		\mathbb{E}(s^N)=\dfrac{\ln(1-\lambda(s-1))}{\lambda(1-s)}.
	\end{equation*}
Therefore,
  \begin{equation*}
		\dfrac{d}{ds} \mathbb{E}(s^N)=\dfrac{[\lambda(s-1)-1]\ln(1+\lambda(1-s))+\lambda(1-s)}{\lambda(s-1)^2[\lambda(s-1)-1]},
	\end{equation*}
and 
	\begin{equation*}
		\mathbb{E}(N)=\lim_{s\rightarrow1^-}\dfrac{d}{ds}\mathbb{E}(s^N)=\dfrac{\lambda}{2}.
	\end{equation*}
\end{proof}

\subsection{Growth model without dispersion}
To establish Theorem~\ref{Th:non-disp}, we apply Foster’s theorem, stated below. A proof of Foster’s theorem can be found in Fayolle \textit{et al.}~\cite[Theorem 2.2.3]{FMM1995}.

\begin{theorem}[Foster's theorem] Let $\{W_n\}_{n\geq 0}$  be an irreducible and aperiodic Markov chain on countable state space $\mathcal{A}=\{\alpha_i,\ i\geq0\}.$ Then, $\{W_n\}_{n\geq 0}$ is ergodic if and only if there exists a positive function $f(\alpha), \ \alpha\in\mathcal{A},$ a number $\epsilon>0$ and a finite set $A\subset\mathcal{A}$ such that 
$$\mathbb{E}[f(W_{n+1})-f(W_{n}) \ | \ W_n=\alpha_j]\leq -\epsilon, \quad \alpha_j\notin A,$$ 
$$\mathbb{E}[f(W_{n+1}) \ | \ W_n=\alpha_i] < \infty, \quad \alpha_i\in A.$$
\end{theorem}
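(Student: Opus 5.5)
The statement is the classical Foster criterion. I would prove the two directions separately: for sufficiency I would bound expected hitting times of $A$ by a supermartingale argument, and for necessity I would take $f$ to be an expected hitting time. Throughout, I use that for an irreducible aperiodic chain on a countable space, ergodicity is equivalent to positive recurrence, i.e.\ finite expected return time to one (hence every) state.

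\emph{Sufficiency.} Let $\tau_A=\inf\{n\ge 1: W_n\in A\}$ be the first return time to $A$.

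First, fix a starting point $\alpha\notin A$. Because the drift is at most $-\epsilon$ off $A$, the process
\[
M_n=f(W_{n\wedge\tau_A})+\epsilon\,(n\wedge\tau_A)
\]
is a nonnegative supermartingale. Hence
\[
\epsilon\,\mathbb{E}_\alpha[n\wedge\tau_A]\le f(\alpha),
\]
and letting $n\to\infty$ by monotone convergence gives $\mathbb{E}_\alpha[\tau_A]\le f(\alpha)/\epsilon<\infty$.

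Next, for a starting point $\alpha_i\in A$, condition on the first step. This gives
\[
\mathbb{E}_{\alpha_i}[\tau_A]\le 1+\tfrac1\epsilon\,\mathbb{E}[f(W_1)\mid W_0=\alpha_i]<\infty .
\]
Since $A$ is finite, $M:=\max_{\alpha\in A}\mathbb{E}_\alpha[\tau_A]<\infty$.

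\emph{From $A$ to a single state.} This is the step I expect to need the most care. Fix $\alpha_0\in A$ and look at the successive visits of $W$ to $A$; they form the induced chain on $A$.
\begin{itemize}
\item Every excursion between visits is almost surely finite, so the induced chain is a Markov chain on the finite set $A$.
\item Irreducibility of $W$ makes the induced chain irreducible.
\item A finite irreducible chain is positive recurrent, so the number $K$ of $A$-visits before reaching $\alpha_0$ has geometric tails, and in particular $\mathbb{E}[K]<\infty$.
\end{itemize}
The return time to $\alpha_0$ is the sum of the first $K$ excursion lengths. Each excursion length has conditional mean at most $M$ given the past, and $K$ is a stopping time for the induced filtration. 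A Wald-type identity then gives $\mathbb{E}_{\alpha_0}[\text{return time}]\le M\,\mathbb{E}[K]<\infty$. So the chain is positive recurrent, and with aperiodicity it is ergodic.

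\emph{Necessity.} Suppose the chain is ergodic. Take $A=\{\alpha_0\}$ and set $f(\alpha)=\mathbb{E}_\alpha[T_{\alpha_0}]$ for $\alpha\ne\alpha_0$, where $T_{\alpha_0}=\inf\{n\ge 0: W_n=\alpha_0\}$, and set $f(\alpha_0)=1$. Positive recurrence and irreducibility make every $f(\alpha)$ finite, and $f>0$.

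First-step analysis gives, for $\alpha\ne\alpha_0$,
\[
f(\alpha)=1+\mathbb{E}[f(W_1)\mathbf 1_{W_1\ne\alpha_0}\mid W_0=\alpha].
\]
Hence
\[
\mathbb{E}[f(W_1)-f(W_0)\mid W_0=\alpha]=\mathbb{P}(W_1=\alpha_0\mid W_0=\alpha)-1 .
\]
To make this at most a fixed $-\epsilon$, I would instead put $f(\alpha_0)=0$ on the exceptional state; since the theorem asks for a positive function, I would then shift to $f+1$, which leaves drifts unchanged. This yields drift exactly $-1$ off $A$, so $\epsilon=1$ works. Finally,
\[
\mathbb{E}[f(W_1)\mid W_0=\alpha_0]\le 1+\mathbb{E}_{\alpha_0}[\text{return time}]<\infty,
\]
which is the second condition.
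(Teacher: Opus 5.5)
Your argument is correct and is the standard proof of Foster's criterion. The paper does not prove this statement itself; it only quotes it from Fayolle, Malyshev and Menshikov (Theorem 2.2.3 of their monograph). Your key steps are all sound: the stopped supermartingale $f(W_{n\wedge\tau_A})+\epsilon\,(n\wedge\tau_A)$ giving $\mathbb{E}_\alpha[\tau_A]\le f(\alpha)/\epsilon$; the reduction from the finite set $A$ to a single state via the induced chain on $A$ and a Wald-type bound; and, for the converse, $f(\alpha)=1+\mathbb{E}_\alpha[T_{\alpha_0}]$. That last is your final choice, which gives drift exactly $-1$, unlike your initial choice $f(\alpha_0)=1$.
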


\begin{proof}[Proof of Theorem~\ref{Th:non-disp}]
Let $\{Y_n\}_{n\geq 0}$ be a discrete-time Markov chain embedded on $C(\lambda)$ with transition 
probabilities given by
$$\begin{array}{ll}
P_{i,i+1}=\displaystyle\frac{\lambda}{\lambda +1},&  \ i\geq 0, \\ \\
P_{i,j}=\displaystyle\frac{1}{i(\lambda +1)}, &  \  0\leq j\leq i-1.\\
\end{array}$$

Ergodicity of $\{Y_n\}$ implies that the time until extintion of $C(\lambda)$ has finite mean.

Observe that $\{Y_n\}$ is irreducible and aperiodic. We use Foster's theorem to show that $\{Y_n\}_{n\geq 0}$ is ergodic for all $\lambda>0$. Consider the function
$f:\mathbb{N}\rightarrow \mathbb{R}^+$ defined by $f(i)=i+1$,  $\epsilon>0$ and the set
$$A:=\left\{i\in \mathbb{N}: \frac{\lambda}{1+\lambda} -\frac{i+1}{2(1+\lambda)} >-\epsilon\right\}.$$ 

For $\lambda>0,$ the set $A$ is finite. Moreover we have that \\

\noindent $\begin{array}{lll}
\bullet \, \mathbb{E}[f(Y_{n+1})-f(Y_{n}) \ | \ Y_n=i]\
&=&[f(i+1)-f(i)]P_{i,i+1}+ \displaystyle\sum_{j=0}^{i-1}[f(j)-f(i)]P_{i,j}  \\ 
&=& \displaystyle\frac{\lambda}{1+\lambda}-\frac{i+1}{2(1+\lambda)}
\leq  -\epsilon \quad \text{for } i\notin A.
\end{array}$\\ 

\noindent$
\bullet \ \mathbb{E}[f(Y_{n+1})|\ Y_n=i] = f(i+1)P_{i,i+1} +\displaystyle\sum_{j=0}^{i-1}f(j)P_{i,j} \le \frac{\lambda(i+2)}{1+\lambda}+\frac{i+1}{2(1+\lambda)} < \infty \text{ for } i\in A. 
$\\

\noindent It follows from Foster's theorem that $\{Y_n\}$ is ergodic and that concludes the proof.
\end{proof}
		
\subsection{Growth models with dispersion and spatial restriction}

\begin{proof}[Proof of Theorem \ref{condicao_extincao_uni_indepen}]
From Machado \textit{et al.}~\cite[Proposition 4.3]{MRV2018}, we have that
\[\psi_d < 1 \quad \text{if and only if} \quad \mathbb{E}\left[ \left(\dfrac{d-1}{d}\right)^N \right] < \dfrac{d-1}{d}.\]
Furthermore, by Proposition~\ref{distribuiçao_individuos}, 
	\begin{equation*}
		\mathbb{E}\left[ \left(\dfrac{d-1}{d}\right)^N \right]=\dfrac{d\ln\left(\dfrac{\lambda+d}{d}\right)}{\lambda}.
	\end{equation*}
	Therefore, $\psi_d<1$ if and only if $\dfrac{d^2}{d-1}\ln\left(\dfrac{\lambda+d}{d}\right)<\lambda$. 
	
\end{proof}

\begin{proof}[Proof of Theorem~\ref{uniforme_independente2}]
	The condition for survival is an application of the Theorem \ref{condicao_extincao_uni_indepen} for $d=2$. In this case, 
	\begin{equation*}
		\psi_2<1 \text{ if and only if }  4\ln\left(1+\frac{\lambda}{2}\right)<\lambda.
	\end{equation*}

To obtain the extinction probability, we consider Proposition 4.4 from Machado~\textit{et al.}~\cite{MRV2018}. Since \(d=2\), \(\psi_2\) is the smallest non-negative solution to the equation
\[
\sum_{y=0}^2 s^y\binom{2}{y}\sum_{n=y}^\infty \dfrac{T(n,y)}{2^n}\mathbb{P}(N=n)=s,
\]
where
	\begin{equation}\label{T(n,y)}
		T(n,y)=\sum_{i=0}^y (-1)^i\binom{y}{i}(y-i)^n.
	\end{equation}
  After substitution and simplification, we obtain
	\begin{equation*}
		\mathbb{P}(N=0)+2s\sum_{n=1}^\infty \dfrac{1}{2^n}\mathbb{P}(N=n)+s^2\sum_{n=2}^\infty\dfrac{2^n-2}{2^n}\mathbb{P}(N=n)=s.
	\end{equation*}
  which can be rewritten as
\begin{equation*}
		(1-s)^2\,\mathbb{P}(N=0)+2s(1-s)\,\mathbb{E}\left[\left(\frac{1}{2}\right)^N\right]=s(1-s).
	\end{equation*}
  
	Consequently, either $s=1$ or
	\begin{equation*}
		s=\dfrac{\mathbb{P}(N=0)}{1+\mathbb{P}(N=0)-2\,\mathbb{E}\left[\left(\frac{1}{2}\right)^N\right]}=\dfrac{\ln(\lambda+1)}{\lambda+\ln(\lambda+1)-4\ln\left(1+\lambda/2\right)}.
	\end{equation*}
	where the final equality is derived from Proposition \ref{distribuiçao_individuos}.

\end{proof}

\begin{proof}[Proof of Theorem~\ref{uniforme_independente3}]
	The condition for survival is an application of the Theorem \ref{condicao_extincao_uni_indepen} for $d=3$. In this case,  
	\begin{equation*}
		\psi_3<1 \text{ if and only if } \dfrac{9}{8} \ln\left(1+\dfrac{\lambda}{3}\right)<\lambda.
	\end{equation*}

 To obtain the extinction probability, we consider Proposition 4.4 from Machado~\textit{et al.}~\cite{MRV2018}. Since \(d=3\), \(\psi_3\) is the smallest non-negative solution to the equation 
	\begin{equation*}
		\sum_{y=0}^3 s^y\binom{3}{y}\sum_{n=y}^\infty \dfrac{T(n,y)}{3^n}\mathbb{P}(N=n)=s,
	\end{equation*}
where $T(n,y)$ is given by equation (\ref{T(n,y)}).
	
After substitution and simplification, we obtain  
\small{
\[
\mathbb{P}(N = 0)
+ 3s \sum_{n = 1}^{\infty} \tfrac{1}{3^n} \mathbb{P}(N = n)
+ 3s^2 \sum_{n = 2}^{\infty} \tfrac{2^n - 2}{3^n} \mathbb{P}(N = n)
+ s^3 \sum_{n = 3}^{\infty} \tfrac{3^n - 3 \cdot 2^n + 3}{3^n} \mathbb{P}(N = n)
= s,
\]
}
which can be rewritten as  
\[
(1-s)^3\mathbb{P}(N=0)
+3s(1-s)^2\mathbb{E}\!\left[\!\left(\tfrac{1}{3}\right)^{\!N}\!\right]
+3s^2(1-s)\mathbb{E}\!\left[\!\left(\tfrac{2}{3}\right)^{\!N}\!\right]
=s(1-s^2).
\]
Using Proposition~\ref{distribuiçao_individuos}, we obtain  
\[
\frac{(1-s)^3}{\lambda}\ln(1+\lambda)
+\frac{9s(1-s)^2}{2\lambda}\ln\!\left(1+\frac{2\lambda}{3}\right)
+\frac{9s^2(1-s)}{\lambda}\ln\!\left(1+\frac{\lambda}{3}\right)
=s(1-s^2),
\]
whose roots are given by \( s = 1 \) and  
\[
s = \frac{1 + 2a - 9b \pm \sqrt{(1 - 9b)^2 + 4a(2 - 9c)}}{2(a - 9b + 9c - 1)},
\]
with  
\[
a = \frac{\ln(1+\lambda)}{\lambda}, \quad
b = \frac{1}{2\lambda}\ln\!\left(1+\frac{2\lambda}{3}\right), \quad
c = \frac{1}{\lambda}\ln\!\left(1+\frac{\lambda}{3}\right).
\]
     
\end{proof}

 \begin{proof}[Proof of Theorem~\ref{esperanca_indepen}] 
First, observe that each colony in $C_d(\lambda)$ survives for an exponentially distributed time with rate $1$ until it is struck by a catastrophe. Each individual that survives the catastrophe then randomly selects a neighboring vertex and attempts to establish a new colony there. When the number of survivors is $n$, the probability that $k \le \min\{d,n\}$ vertices become colonized is
\[
\frac{T(n,k)}{d^n} \binom{d}{k},
\]
where
\[
T(n,k) = \sum_{i=0}^{k} (-1)^i \binom{k}{i}(k-i)^n
\]
is the number of surjective functions $f:A \to B$ with $|A| = n$ and $|B| = k$. 

Let $W_t$ denote the number of colonies at time $t$ in the model $C_d(\lambda)$. Note that $W_t$ is a continuous-time branching process with $W_0 = 1$. Thus, each colony in $W_t$, just before dying, produces $k \le d$ offspring colonies with probability $p_k$ given by
\begin{equation}\label{p_k}
p_k =
\begin{cases}
\mathbb{P}(N=0), & k=0, \\[1em]
\displaystyle \sum_{n=k}^\infty \binom{d}{k} \frac{T(n,k)}{d^n} \mathbb{P}(N=n), & 1 \le k \le d-1, \\[1em]
1 - \displaystyle\sum_{j=0}^{d-1} p_j, & k=d.
\end{cases}
\end{equation}

Moreover, $\tau_d = \inf\{t > 0 : W_t = 0\}$.

\noindent$\bullet$ For $d = 2$, substituting into~(\ref{p_k}) and applying Proposition~\ref{distribuiçao_individuos}, we obtain

\begin{align*}
p_0 &= \frac{\ln(\lambda + 1)}{\lambda}, \\[0.5em]
p_1 &= 2\!\left( \mathbb{E}\!\left[\left(\tfrac{1}{2}\right)^{N}\right] - \mathbb{P}(N = 0) \right)
      = \frac{2}{\lambda}\,\ln\!\left(\frac{(\lambda + 2)^2}{4(\lambda + 1)}\right), \\[0.5em]
p_2 &= 1 - \frac{1}{\lambda}\,\ln\!\left(\frac{(\lambda + 2)^4}{16(\lambda + 1)}\right).
\end{align*}

Furthermore, the condition $4\ln\left(1+\frac{\lambda}{2}\right)>\lambda$ is equivalent to $p_1 + 2p_2 < 1$. Thus, by Junior~\textit{et al.}~\cite[Lemma~4.1($i$)]{JMR2023}, we obtain
\[
\mathbb{E}[\tau_2]
    = \frac{1}{p_2}\,\ln\!\left(\frac{p_0}{p_0 - p_2}\right),
\]
which yields the statement of Theorem~\ref{esperanca_indepen} with $\alpha = p_0$ and $\beta = p_2$.

If $4\ln\left(1+\frac{\lambda}{2}\right)=\lambda$, then $p_1 + 2p_2 = 1$. In this case, by Junior~\textit{et al.}~\cite[Lemma~4.1($i$)]{JMR2023}, we have $\mathbb{E}[\tau_2]=\infty$.

 \noindent$\bullet$ For $d = 3$, substituting into~(\ref{p_k}) and applying Proposition~\ref{distribuiçao_individuos}, we obtain
\begin{align*}
p_0 &= \frac{\ln(\lambda + 1)}{\lambda}, \\[0.5em]
p_1 &= 3\!\left( \mathbb{E}\!\left[\tfrac{1}{3^{N}}\right] - \mathbb{P}(N = 0) \right)
      = \frac{3}{2\lambda}\,\ln\!\left(\frac{(2\lambda + 3)^3}{27(\lambda + 1)^2}\right), \\[0.5em]
p_2 &= 3\!\left( \mathbb{E}\!\left[\tfrac{2^{N} - 2}{3^{N}}\right] + \mathbb{P}(N = 0) \right)
      = \frac{3}{\lambda}\,\ln\!\left(\frac{(\lambda + 1)(\lambda + 3)^3}{(2\lambda + 3)^3}\right), \\[0.5em]
p_3 &= 1 - \frac{1}{2\lambda}\,\ln\!\left(\frac{(\lambda + 3)^{18}(\lambda + 1)^2}{27^3(2\lambda + 3)^9}\right).
\end{align*}

Furthermore, the condition $ \dfrac{9}{2} \ln\left(1+\dfrac{\lambda}{3}\right)>\lambda$ is equivalent to $p_1+2p_2+3p_3<1$. Thus, by Junior~\textit{et al}~\cite[Lemma 4.1($ii$)]{JMR2023}, we obtain
	\begin{align*}
		\mathbb{E}[\tau_3]&=\dfrac{1}{\sqrt{4p_0p_3+(p_2+p_3)^2}}\ln\left[ \dfrac{2p_0-p_2-p_3+\sqrt{4p_0p_3+(p_2+p_3)^2}}{2p_0-p_2-p_3-\sqrt{4p_0p_3+(p_2+p_3)^2}} \right],
	\end{align*}
which yields the statement of Theorem~\ref{esperanca_indepen} with $\alpha = p_0,$ $\theta=p_2+p_3$ and $\gamma = p_3$.

If $ \tfrac{9}{2} \ln\left(1+\tfrac{\lambda}{3}\right)=\lambda$, we have that $p_1+2p_2+3p_3=1$. Thus, by Junior~\textit{et al}~\cite[Lemma 4.1($ii$)]{JMR2023}, $\mathbb{E}[\tau_3]=\infty$.
\end{proof}

\subsection{Growth models with dispersion but no spatial restriction}

\begin{proof}[Proof of Theorem~\ref{th:comdisptime}]
The number of colonies in the process $C_*(\lambda)$ can be modeled as a branching process in which each colony survives for an exponentially distributed time with rate~1 and, just before dying, produces a random number of offspring colonies according to the distribution of the random variable~$N$. The distribution, expectation, and generating function of~$N$ are given in Proposition~\ref{distribuiçao_individuos}. 

Therefore, the process $C_*(\lambda)$ survives if and only if $\lambda > 2$. Moreover, the extinction probability~$\psi_*$ is the smallest nonnegative solution of
\[
\frac{\ln[1 + \lambda(1 - y)]}{\lambda(1 - y)} = y.
\]
Furthermore, according to Narayan~\cite{Narayan1982}, if $\lambda \le 2$, the mean extinction time is given by
\[
\mathbb{E}[\tau_*] = \int_0^1 \frac{1 - y}{f(y) - y}\,dy,
\]
where
\[
f(y) = \frac{\ln[1 + \lambda(1 - y)]}{\lambda(1 - y)}, \qquad 0 < y < 1.
\]
\end{proof}

%%%%%%%%%%%%%%%%%%%%%%%%%%%%%%%%%%%%%%%%%%

%\section{Acknowledgments} The authors are thankful for the two anonymous referees  for a careful reading, many suggestions and corrections that helped to improve the  paper.

%\section{Acknowledgments} 

\end{document}